\newtheorem{thm}{Theorem}[section]
\newtheorem{conj}[thm]{Conjecture}
\newtheorem{lem}[thm]{Lemma}
\newcommand*{\house}[1]{%
  \mathord{%
    \mathpalette\@house{#1}%
  }%
}
\newcommand*{\@house}[2]{%
  \dimen@=\fontdimen8 %
      \ifx#1\scriptscriptstyle\scriptscriptfont
      \else\ifx#1\scriptstyle\scriptfont
      \else\textfont\fi\fi
      3 %
  \sbox0{%
    $#1%
      \vrule width\dimen@\relax
      \overline{%
        \kern2\dimen@
        \begingroup 
          #2%
        \endgroup
        \kern2\dimen@
      }%
      \vrule width\dimen@\relax
      \mathsurround=1.5\dimen@ 
    $%
  }%
  \ht0=\dimexpr\ht0-\dimen@\relax
  \dp0=\dimexpr\dp0+2\dimen@\relax
  \vbox{%
    \kern\dimen@ 
    \copy0 %
  }%
}
\begin{document}

\title{A Lower Bound for the Area of the Fundamental Region of a Binary Form}
\author{Jason Fang and Anton Mosunov}
\affil{University of Waterloo}


\date{}

\maketitle

\begin{abstract}
Let
$$
F(x, y) = \prod\limits_{k = 0}^{n - 1}(\delta_kx - \gamma_ky)
$$
be a binary form of degree $n \geq 1$, with complex coefficients, written as a product of $n$ linear forms in $\mathbb C[x, y]$. Let
$$
h_F = \prod\limits_{k = 0}^{n - 1}\sqrt{|\gamma_k|^2 + |\delta_k|^2}
$$
denote the \emph{height} of $F$ and let $A_F$ denote the area of the \emph{fundamental region} $\mathcal D_F$ of $F$:
$$
\mathcal D_F = \left\{(x, y) \in \mathbb R^2 \colon |F(x, y)| \leq 1\right\}.
$$
We prove that $h_F^{2/n}A_F \geq \left(2^{1 + (r/n)}\right)\pi$, where $r$ is the number of roots \mbox{of $F$} on the real projective line $\mathbb R\mathbb P^1$, counting multiplicity.
\end{abstract}

\section{Introduction}
Let
$$
F(x, y) = \prod\limits_{k = 0}^{n - 1}(\delta_kx - \gamma_ky)
$$
be a binary form of degree $n \geq 1$, with complex coefficients, written as a product of $n$ linear forms in $\mathbb C[x, y]$. Let
$$
h_F = \prod\limits_{k = 0}^{n - 1}\sqrt{|\gamma_k|^2 + |\delta_k|^2}
$$
denote the \emph{height} of $F$. We define the \emph{fundamental region} (also known as the \emph{fundamental domain}) $\mathcal D_F$ of $F$ as
$$
\mathcal D_F = \left\{(x, y) \in \mathbb R^2 \colon |F(x, y)| \leq 1\right\}.
$$
The area $A_F$ of the fundamental region $\mathcal D_F$ plays an important role in number theory. Consider the special case when $F \in \mathbb Z[x, y]$ is a binary form of degree $n \geq 3$ and nonzero discriminant $D_F$. In 1933, Mahler \cite{mahler} proved that the number of solutions $Z_F(m)$ to the \emph{Thue inequality}
$$
|F(x, y)| \leq m
$$
can be approximated by $A_Fm^{2/n}$, provided that $F$ is irreducible over the rationals (here $m$ is a non-negative integer). More precisely, he proved the existence of a positive real number $c_F$, which depends only on $F$, such that
$$
\left|Z_F(m) - A_Fm^{2/n}\right| \leq c_Fm^{1/(n - 1)}.
$$
In 2019, Stewart and Xiao \cite[Theorem 1.1]{stewart-xiao} proved that the number of integers $R_F(m)$ of absolute value at most $m$ which are represented by $F$ is asymptotic to $W_FA_Fm^{2/n}$, where $W_F$ is a positive rational number that can be computed in terms of the \emph{automorphism group} of $F$ (see \cite{stewart-xiao} for the definition of the automorphism group). Thus it is interesting to investigate how large and how small can $A_F$ be. Motivated by this question we succeeded in proving the following.

\begin{thm} \label{thm:main}
Let $F$ be a binary form with complex coefficients of degree $n \geq 1$, with exactly $r$ roots on the real projective line $\mathbb R\mathbb P^1$, counting multiplicity. Then
\begin{equation} \label{eq:main-inequality}
h_F^{2/n}A_F \geq \left(2^{1 + (r/n)}\right)\pi.
\end{equation}
\end{thm}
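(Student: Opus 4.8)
The plan is to reduce the two-dimensional area integral to a one-dimensional integral over the polar angle and then bound it from below via Jensen's inequality. First I would exploit a scale invariance of the target quantity. Multiplying a single linear factor $\delta_k x - \gamma_k y$ by a nonzero $c \in \CC$ replaces $F$ by $cF$; by homogeneity $F(tx,ty) = t^n F(x,y)$ this dilates $\mathcal D_F$ linearly by the factor $|c|^{-1/n}$, so $A_{cF} = |c|^{-2/n}A_F$, while $h_{cF} = |c|\,h_F$. Hence $h_F^{2/n}A_F$ is unchanged, and I may normalize each factor so that $|\gamma_k|^2 + |\delta_k|^2 = 1$, making $h_F = 1$ and reducing the claim to $A_F \ge 2^{1 + r/n}\pi$.

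Next I would pass to polar coordinates $(x,y) = \rho(\cos\theta,\sin\theta)$. Writing $\ell_k(\theta) = \delta_k\cos\theta - \gamma_k\sin\theta$, homogeneity gives $|F(x,y)| = \rho^n\prod_k|\ell_k(\theta)|$, so $\mathcal D_F$ is cut out by $\rho \le \left(\prod_k|\ell_k(\theta)|\right)^{-1/n}$ and
\[
A_F = \frac{1}{2}\int_0^{2\pi}\left(\prod_{k=0}^{n-1}|\ell_k(\theta)|\right)^{-2/n}\,d\theta.
\]
A short computation using the normalization, with $v_k = \im(\delta_k\overline{\gamma_k})$, shows $|\ell_k(\theta)|^2 = \frac{1}{2}(1 + c_k\cos(2\theta - \beta_k))$ where $c_k = \sqrt{1 - 4v_k^2}$ and $\beta_k$ is a phase, so that
\[
A_F = \int_0^{2\pi}\left(\prod_{k=0}^{n-1}\left(1 + c_k\cos(2\theta - \beta_k)\right)\right)^{-1/n}\,d\theta.
\]
The crucial observation is that the $k$-th factor attains maximal amplitude $c_k = 1$ (and so vanishes at one angle) exactly when $L_k$ has a real root, i.e.\ when $v_k = 0$; thus $r = \#\{k : v_k = 0\}$.

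To bound this integral below I would avoid Hölder's inequality, which points the wrong way, and instead apply Jensen's inequality with the uniform probability measure $d\theta/2\pi$ and the convex exponential applied to the logarithm of the integrand, obtaining
\[
A_F \ge 2\pi\exp\left(-\frac{1}{n}\sum_{k}\frac{1}{2\pi}\int_0^{2\pi}\log\left(1 + c_k\cos(2\theta - \beta_k)\right)\,d\theta\right).
\]
The inner integrals are the classical mean values $\frac{1}{2\pi}\int_0^{2\pi}\log(1 + c\cos\phi)\,d\phi = \log\frac{1 + \sqrt{1-c^2}}{2}$, which I would verify from the mean-value property of $\log|\cdot|$ on the unit circle. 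Since $\sqrt{1 - c_k^2} = 2|v_k|$, this yields $A_F \ge 2\pi\prod_k\left(\frac{2}{1 + 2|v_k|}\right)^{1/n}$.

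Finally, since $2|v_k| = 2|\im(\delta_k\overline{\gamma_k})| \le 2|\delta_k||\gamma_k| \le |\delta_k|^2 + |\gamma_k|^2 = 1$, every factor is at least $1$ and equals exactly $2$ for the $r$ indices with $v_k = 0$; hence the product is at least $2^{r/n}$ and $A_F \ge 2^{1 + r/n}\pi$, which is the desired bound. I expect the main obstacle to be pinning down the correct direction of the integral inequality: the product structure makes Hölder the tempting first move, yet it bounds $A_F$ from above, and it is the log-convexity route through Jensen that makes the lower bound work. A secondary point to treat carefully is the real-root case, where the integrand develops a singularity (integrable precisely when $n \ge 3$); the Jensen argument is robust there because the averaged logarithms $\log\frac{1 + 2|v_k|}{2}$ remain finite even at $v_k = 0$, so the inequality persists, holding trivially when $A_F = \infty$.
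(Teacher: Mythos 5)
Your proof is correct, and while it shares the paper's overall skeleton (normalize each factor to unit height, pass to the polar area formula, and apply Jensen's inequality to the logarithm of the integrand), the key step is handled by a genuinely different computation. The paper splits the factors into real and non-real roots, evaluates the real-root integrals via $\int_0^{\pi/2}\log\sin\theta\,d\theta = -\tfrac{\pi}{2}\log 2$, and for the non-real roots pairs $\theta$ with $\pi-\theta$, bounds $|\ell_k(\theta)\ell_k(\pi-\theta)|$ by $(\sin\beta_k\cos\theta)^2+(\cos\beta_k\sin\theta)^2$, and invokes a second application of Jensen (its Lemma 2.2) to finish. You instead put every factor into the uniform closed form $|\ell_k(\theta)|^2=\tfrac12\bigl(1+c_k\cos(2\theta-\beta_k)\bigr)$ with $c_k=\sqrt{1-4v_k^2}$, $v_k=\im(\delta_k\overline{\gamma_k})$ (a computation I checked: $(|\delta_k|^2-|\gamma_k|^2)^2+4\re(\delta_k\overline{\gamma_k})^2=1-4v_k^2$ under the normalization), and then evaluate each logarithmic mean exactly via $\frac{1}{2\pi}\int_0^{2\pi}\log(1+c\cos\phi)\,d\phi=\log\frac{1+\sqrt{1-c^2}}{2}$. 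This buys you the sharper intermediate inequality $A_F\ge 2\pi\prod_k\bigl(\tfrac{2}{1+2|v_k|}\bigr)^{1/n}$, from which the theorem follows by the elementary estimate $2|v_k|\le|\gamma_k|^2+|\delta_k|^2=1$ with equality-to-$2$ of the $k$-th factor precisely at the real roots; it also uses Jensen only once and treats real and non-real roots uniformly. What the paper's route buys in exchange is that it needs nothing beyond two elementary real integrals, whereas your argument rests on the classical mean value $\frac{1}{2\pi}\int_0^{2\pi}\log(1+c\cos\phi)\,d\phi$, which you should either prove (e.g.\ by writing $1+c\cos\phi=|1+te^{i\phi}|^2/(1+t^2)$ with $2t/(1+t^2)=c$, $t\le 1$, and using $\frac{1}{2\pi}\int_0^{2\pi}\log|1+te^{i\phi}|\,d\phi=0$) or cite; your handling of the integrable singularities at $c_k=1$ and of the divergent case $n\le 2$, $r>0$ is also correct.
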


For complex numbers $x$ and $y$ with positive real parts, let
\begin{equation} \label{eq:beta}
B(x, y) = 2\int\limits_0^{\pi/2}(\sin \theta)^{2x - 1}(\cos \theta)^{2y - 1}d\theta
\end{equation}
denote the \emph{beta function}, written in its trigonometric form. The lower bound for $A_F$ given in Theorem \ref{thm:main} complements the upper bound for $A_F$ found by Bean \cite[Theorem 1]{bean94}. Bean's result asserts that for any binary form with complex coefficients having degree $n \geq 3$ and nonzero discriminant $D_F$ the following inequality holds:
$$
|D_F|^{1/n(n - 1)}A_F \leq 3B\left(\frac{1}{3}, \frac{1}{3}\right).
$$

One of the ingredients in our proof is Jensen's inequality, which states that
\begin{equation} \label{eq:jensen}
\varphi\left(\frac{1}{b - a}\int\limits_a^b f(x) dx\right) \geq \frac{1}{b - a}\int\limits_a^b\varphi(f(x))dx
\end{equation}
for every concave function $\varphi(x)$ and every real-valued function $f(x)$ that is Lebesgue-integrable on an interval $[a, b]$. Another important ingredient is the polar formula for the computation of $A_F$. Since the curve $|F(x, y)| = 1$ can be expressed in polar form as
$$
r(\theta) = \frac{1}{\left|F(\cos \theta, \sin \theta)\right|^{1/n}},
$$
from calculus we know that the area $\mathcal D_F$ bounded by this curve can be computed as follows:
\begin{equation} \label{eq:polar-formula}
A_F = \int\limits_0^{2\pi}\frac{1}{2}r^2(\theta)d\theta = \frac{1}{2}\int\limits_0^{2\pi}\frac{d\theta}{\left|F(\cos \theta, \sin \theta)\right|^{2/n}}.
\end{equation}
With this formula one can also notice that $A_{cF} = |c|^{-2/n}A_F$ for any nonzero complex number $c$. Since $h_{cF} = |c|h_F$, we can see that the quantity $h_F^{2/n}A_F$ is invariant under scaling by a nonzero complex number, i.e.,
$$
h_{cF}^{2/n}A_{cF} = \left(|c|h_F\right)^{2/n}\left(|c|^{-2/n}A_F\right) = h_F^{2/n}A_F.
$$

When $r = 0$ or $r = n$ the right-hand side of the inequality (\ref{eq:main-inequality}) becomes independent of both $r$ and $n$. In the case $r = n$ there does exist a family of binary forms $F_{n, n}$ such that
$$
\lim\limits_{n \rightarrow \infty}\left(h_{F_{n, n}}A_{F_{n, n}}\right) = 4\pi.
$$
This family was studied by Bean and Laugesen \cite{bean-laugesen} and a binary form $F_{n, n}$ is defined by
$$
F_{n, n}(x, y) = \prod\limits_{k = 1}^n\left(x\sin\frac{k\pi}{n} - y\cos\frac{k\pi}{n}\right).
$$
In this case, for $n \geq 3$ we have
$$
h_{F_{n, n}} = 1 \quad \text{and} \quad A_{F_{n, n}} = 4^{1 - 1/n}B\left(\frac{1}{2} - \frac{1}{n}, \frac{1}{2}\right).
$$
In the case $r = 0$ the family of binary forms $F_{n, 0}$ defined as
$$
F_{n, 0}(x, y) = (x - iy)^n
$$
satisfies $h_{F_{n, 0}}^{2/n}A_{F_{n, 0}} = 2\pi$ for every positive integer $n$. To see that this is the case, note that  $h_{F_{n, 0}} = 2^{n/2}$, and that
$$
|F(\cos \theta, \sin \theta)| = |(\cos \theta - i \sin \theta)^n| = \left|\exp\left(-\theta i\right)^n\right| = \left|\exp(-n\theta i)\right| = 1.
$$
Thus it follows from (\ref{eq:polar-formula}) that $A_{F_{n, 0}} = \pi$. The family $F_{n, 0}$ has a rather special property that the quantity $h_F^{2/\deg F} A_F$ attains its smallest value when $F = cF_{n, 0}$ for some $n \in \mathbb N$ and nonzero $c \in \mathbb C$.\footnote{Notice that there are other families that also attain the minimum. For example, for \mbox{even $n$} and nonzero $c \in \mathbb C$ the minimum is attained by the form $c(x^2 + y^2)^{n/2}$.} By combining the above two examples together, we define the family of binary forms
\begin{equation} \label{eq:Fnr}
F_{n, r}(x, y) = (x - iy)^{n - r}\prod\limits_{k = 1}^r\left(x\sin\frac{k\pi}{r} - y\cos\frac{k\pi}{r}\right)
\end{equation}
and conjecture the following.

\begin{conj} \label{conj:main}
Let $F$ be a binary form with complex coefficients of degree $n \geq 1$, with exactly $r$ roots on the real projective line $\mathbb R\mathbb P^1$, counting multiplicity. Then
$$
h_F^{2/n}A_F \geq h_{F_{n, r}}^{2/n}A_{F_{n, r}}.
$$
\end{conj}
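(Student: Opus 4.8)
The plan is to exploit the scale-invariance of $h_F^{2/n}A_F$ to reduce to a convenient normalization, and then to apply Jensen's inequality (\ref{eq:jensen}) to the polar integral (\ref{eq:polar-formula}) \emph{after taking logarithms}, which converts the product over the linear factors of $F$ into a sum of one-dimensional logarithmic integrals that can each be evaluated exactly. First I would normalize: since $h_{cF}^{2/n}A_{cF} = h_F^{2/n}A_F$ and rescaling the $k$-th factor $\delta_k x - \gamma_k y$ by a positive constant merely rescales $F$ overall, I may assume without loss of generality that $|\delta_k|^2 + |\gamma_k|^2 = 1$ for every $k$, so that $h_F = 1$ while the set of roots, and hence $r$, is unchanged. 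It then suffices to prove $A_F \geq 2^{1 + (r/n)}\pi$; and if $A_F = \infty$ there is nothing to prove, so I assume $A_F < \infty$.

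Writing $u(\theta) = \log|F(\cos\theta, \sin\theta)| = \sum_{k=0}^{n-1}\log|\delta_k\cos\theta - \gamma_k\sin\theta|$, the polar formula (\ref{eq:polar-formula}) reads $A_F = \pi\cdot\frac{1}{2\pi}\int_0^{2\pi}\exp\!\left(-\frac{2}{n}u(\theta)\right)d\theta$. Applying Jensen's inequality (\ref{eq:jensen}) with the concave function $\varphi = \log$ to $f = \exp(-\frac{2}{n}u)$ on $[0, 2\pi]$ and then exponentiating gives

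$$
A_F \geq \pi\exp\left(-\frac{2}{n}\cdot\frac{1}{2\pi}\int_0^{2\pi}u(\theta)\,d\theta\right) = \pi\exp\left(-\frac{2}{n}\sum_{k=0}^{n-1}I_k\right), \qquad I_k = \frac{1}{2\pi}\int_0^{2\pi}\log\left|\delta_k\cos\theta - \gamma_k\sin\theta\right|\,d\theta.
$$

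Thus the whole problem reduces to an \emph{upper} bound on each $I_k$.

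To evaluate $I_k$ I would write $\delta_k\cos\theta - \gamma_k\sin\theta = \frac{1}{2}e^{-i\theta}\bigl(p_k e^{2i\theta} + q_k\bigr)$ with $p_k = \delta_k + i\gamma_k$ and $q_k = \delta_k - i\gamma_k$, substitute $\phi = 2\theta$ (a double cover of the circle), and invoke the classical mean-value identity $\frac{1}{2\pi}\int_0^{2\pi}\log|a + be^{i\phi}|\,d\phi = \log\max(|a|, |b|)$, a special case of Jensen's formula from complex analysis. Since under the normalization $|p_k|^2 = 1 + 2\im(\delta_k\bar\gamma_k)$ and $|q_k|^2 = 1 - 2\im(\delta_k\bar\gamma_k)$, this yields $I_k = -\log 2 + \frac{1}{2}\log\bigl(1 + 2|\im(\delta_k\bar\gamma_k)|\bigr)$. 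The decisive observation is that the root $[\gamma_k : \delta_k]$ of $\delta_k x - \gamma_k y$ lies on $\mathbb R\mathbb P^1$ exactly when $\im(\delta_k\bar\gamma_k) = 0$, in which case $I_k = -\log 2$; for a non-real root the elementary estimate $|\im(\delta_k\bar\gamma_k)| \leq |\delta_k||\gamma_k| \leq \frac{1}{2}$ gives instead $I_k \leq -\frac{1}{2}\log 2$. Summing over the $r$ real and $n - r$ non-real roots produces $\sum_k I_k \leq -\frac{n+r}{2}\log 2$, and substituting this into the displayed lower bound gives exactly $A_F \geq \pi\exp\bigl(\frac{n+r}{n}\log 2\bigr) = \pi\,2^{1 + r/n}$.

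The main obstacle I anticipate is orienting the inequality correctly: the naive AM--GM estimate applied to the integrand $\prod_k|\delta_k\cos\theta - \gamma_k\sin\theta|^{-2/n}$ runs backwards, and the crux is recognizing that the right move is instead to bound the circular average of $\log|F|$ from \emph{above}, which simultaneously linearizes the product and cleanly isolates the contribution of each root. Once this is seen, the analytic heart is the exact evaluation of $I_k$ and the resulting dichotomy --- precisely $-\log 2$ for each real root versus a strictly larger value for each complex root --- which is exactly what generates the sharp exponent $1 + r/n$ and explains the appearance of $r$ in the bound.
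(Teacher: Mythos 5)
The statement you set out to prove is Conjecture \ref{conj:main}, which the paper leaves \emph{open}: the paper proves only the weaker Theorem \ref{thm:main}, so there is no ``paper proof'' to match. The fatal step in your proposal is the sentence ``it then suffices to prove $A_F \geq 2^{1 + (r/n)}\pi$.'' That is the bound of Theorem \ref{thm:main}, not of the conjecture. By the computation in Section \ref{sec:Gnr}, the conjectured lower bound (after normalizing $h_F = 1$) is
$$
h_{F_{n,r}}^{2/n}A_{F_{n,r}} \;=\; 2^{1 + (r-2)/n}\,B\!\left(\tfrac{1}{2} - \tfrac{1}{n}, \tfrac{1}{2}\right) \qquad (n \geq 3,\ r \geq 1),
$$
which is \emph{strictly} larger than $2^{1 + (r/n)}\pi$ (for $n = 3$, $r = 1$ it is roughly $11.6$ versus $7.9$); and for $n \leq 2$, $r \geq 1$ the conjectured bound is $+\infty$, which your finite estimate cannot deliver (assuming $A_F < \infty$ does not dispose of those cases, since there the conjecture \emph{is} the assertion that $A_F = \infty$). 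So even granting every subsequent step, you have proved Theorem \ref{thm:main}, and the conjecture only in the case $r = 0$, where the two constants coincide at $2\pi$.

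The shortfall is structural, not reparable by tightening your estimates: the only lossy step for a form with real roots is Jensen's inequality $\log(\operatorname{avg} f) \geq \operatorname{avg}(\log f)$, which is an equality only when $f$ is essentially constant, and $|F(\cos\theta,\sin\theta)|^{-2/n}$ is never constant when $F$ has a root on $\mathbb R\mathbb P^1$ (it blows up there). Your evaluation of $I_k$ is exact ($I_k = -\log 2$) at real roots, so there is no slack elsewhere to recover; indeed the conjectural extremal form $F_{n,r}$ itself passes through your chain of inequalities with strict inequality once $r \geq 1$. That said, your argument is a correct and genuinely different proof of Theorem \ref{thm:main}, and arguably a cleaner one than the paper's: the mean-value identity $\frac{1}{2\pi}\int_0^{2\pi}\log|a + be^{i\phi}|\,d\phi = \log\max(|a|,|b|)$ replaces both Lemma \ref{lem:logsin} and the $\theta \mapsto \pi - \theta$ pairing trick of Lemma \ref{lem:bound}, treats real and non-real roots uniformly, and makes the origin of the exponent $1 + r/n$ transparent (each real root contributes exactly $-\log 2$ to the average of $\log|F|$, each non-real root at most $-\tfrac{1}{2}\log 2$). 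It simply does not, and cannot, reach the conjectured sharp constant.
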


For a $2 \times 2$ matrix $M = \left(\begin{smallmatrix}a & b\\c & d\end{smallmatrix}\right)$ define
$$
F_M(x, y) = F(ax + by, cx + dy).
$$
We say that two binary forms $F$ and $G$ are \emph{equivalent under $\operatorname{GL}_2(\mathbb R)$} \mbox{if and only if} there exists an invertible $2 \times 2$ matrix $M$, with real coefficients, such that $F_M = G$. Conjecture \ref{conj:main} naturally complements the following conjecture of Bean \cite[Conjecture 1]{bean95}.

\begin{conj}
The maximal value $M_n$ of the quantity $|D_F|^{1/n(n - 1)}A_F$ over the class of forms of degree $n$ with complex coefficients and nonzero discriminant $D_F$ is attained precisely when $F$ is a form which, up to multiplication by a complex number, is equivalent under $\operatorname{GL}_2(\mathbb R)$ to the form $F_{n, n}$. In particular,
$$
M_n = D_{F_{n, n}}^{1/n(n - 1)}A_{F_{n, n}}.
$$
\end{conj}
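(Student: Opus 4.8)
The final statement is Bean's conjecture, so what follows is a plan of attack built around the fact that the functional $J(F) := |D_F|^{1/n(n-1)}A_F$ is invariant under all the symmetries in play. First I would record two invariances. Scaling invariance follows exactly as for $h_F^{2/n}A_F$ in the discussion above, using the weight identity $D_{cF} = c^{2(n-1)}D_F$ together with $A_{cF} = |c|^{-2/n}A_F$. For $\GL_2(\mathbb R)$-invariance I would combine the classical weight formula $D_{F_M} = (\det M)^{n(n-1)}D_F$ with the change-of-variables computation $A_{F_M} = A_F/|\det M|$, which follows from (\ref{eq:polar-formula}) because $\mathcal D_{F_M}$ is the preimage of $\mathcal D_F$ under the linear map with matrix $M$. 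Consequently $J$ descends to a function on configurations of $n$ points of $\mathbb C\mathbb P^1$ (distinct, since $D_F \neq 0$) modulo $\operatorname{PGL}_2(\mathbb R)$, and the statement becomes the claim that this function attains its maximum precisely on the $\operatorname{PGL}_2(\mathbb R)$-orbit of the equispaced real configuration defining $F_{n, n}$ in (\ref{eq:Fnr}).

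Second, I would prove that the supremum $M_n$ is genuinely attained. Boundedness is free from Bean's inequality $J(F) \le 3B(1/3, 1/3)$. The only way a sequence of forms escapes every compact set of the quotient is by letting two roots collide, and along such a sequence $|D_F|^{1/n(n-1)} \to 0$; so I would need a quantitative estimate showing that $A_F$ grows strictly more slowly than $|D_F|^{-1/n(n-1)}$ as roots coalesce, which rules out degenerate maximizing sequences and yields an honest maximizer $F^\ast$. That estimate should come from (\ref{eq:polar-formula}) by isolating the contribution of the two colliding factors to the angular integral.

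Third, I would analyze $F^\ast$ as a critical point. Writing $\log J$ in the root vectors $r_k = (\gamma_k, \delta_k)$, the discriminant contributes a logarithmic-repulsion (Coulomb-gas) energy $\tfrac{1}{n(n-1)}\sum_{i<j}\log|\gamma_i\delta_j - \gamma_j\delta_i|^2$, while $A_F$ contributes $\log A_F$, which I would differentiate under the integral sign in (\ref{eq:polar-formula}) after expanding $|F(\cos\theta, \sin\theta)|$ as a product over the factors $\delta_k\cos\theta - \gamma_k\sin\theta$. The first-order conditions then read, for each root, as a balance between logarithmic repulsion from the other roots and a nonlocal ``area force'' given by an explicit angular integral. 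A key structural input is that the identity $|F_{n, n}(\cos\theta, \sin\theta)| = 2^{1 - n}|\sin(n\theta)|$ makes the area force perfectly symmetric, so the equispaced real configuration solves the critical equations; I would verify this directly. Before attacking equal spacing I would first force every root of the maximizer onto $\mathbb R\mathbb P^1$ by a reflection/symmetrization argument: conjugation of coefficients preserves $J$ (as $|D_{\overline F}| = |D_F|$ and $A_{\overline F} = A_F$), and I would attempt to show that a polarization sending a conjugate pair of roots toward $\mathbb R\mathbb P^1$ does not decrease $|D_F|$, via Hadamard-type bounds on the determinants $\gamma_i\delta_j - \gamma_j\delta_i$, while increasing the symmetry, and hence the area, of $\mathcal D_F$.

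The main obstacle is the final rigidity step: proving that the equispaced real configuration is the \emph{unique} critical point up to $\operatorname{PGL}_2(\mathbb R)$ and scaling, rather than merely \emph{a} critical point. Since the area force has no closed form for a general configuration, one cannot simply solve the balance equations; instead one would need either a hidden concavity of $\log J$ along geodesics of the moduli space, or a monotone symmetrization procedure that provably converges to $F_{n, n}$. Securing either of these, together with the non-degeneration estimate of the second step and the reality argument of the third, is exactly the content that has kept the statement a conjecture, and I expect it to be the crux of any complete proof.
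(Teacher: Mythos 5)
The statement you are addressing is Bean's conjecture, which the paper explicitly presents as an open problem (citing \cite[Conjecture 1]{bean95}); the paper contains no proof of it, so there is nothing to compare your argument against on that side. Your submission is, by your own account, a program rather than a proof, and it does not close the conjecture. The one part that is actually carried out --- the invariance of $J(F) = |D_F|^{1/n(n-1)}A_F$ under scaling and under $\GL_2(\R)$ --- is correct: $D_{cF} = c^{2(n-1)}D_F$ together with $A_{cF} = |c|^{-2/n}A_F$ gives scale invariance, and $D_{F_M} = (\det M)^{n(n-1)}D_F$ together with $A_{F_M} = A_F/|\det M|$ (since $\mathcal D_{F_M} = M^{-1}\mathcal D_F$) gives the $\GL_2(\R)$ invariance. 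This reduction to configurations of $n$ distinct points of $\CC\mathbb P^1$ modulo $\operatorname{PGL}_2(\R)$ is standard and is essentially how Bean frames the problem.

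The genuine gaps are the three steps you flag but do not execute. (i) The non-degeneration estimate --- that $A_F$ grows strictly slower than $|D_F|^{-1/n(n-1)}$ as two roots collide --- is plausible but requires a uniform local analysis of the angular integral in (\ref{eq:polar-formula}) near a coalescing pair, and without it you have no maximizer $F^\ast$ to analyze. (ii) The claim that a polarization moving a conjugate pair toward $\R\mathbb P^1$ does not decrease $|D_F|$ while increasing $A_F$ is not established; Hadamard-type bounds on the $2\times 2$ determinants $\gamma_i\delta_j - \gamma_j\delta_i$ control each factor separately but do not obviously yield monotonicity of the full product along your deformation, and the simultaneous monotonicity of the area is an independent unproven assertion. (iii) Most importantly, the rigidity step --- that the equispaced real configuration is the \emph{unique} critical point up to the group action --- is exactly the content of the conjecture, and you correctly identify that neither a concavity of $\log J$ on the moduli space nor a convergent symmetrization is available. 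So the proposal identifies a reasonable variational framework but supplies no step that was not already implicit in Bean's formulation; the conjecture remains open, and your write-up should present itself as a discussion of strategy rather than a proof.
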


The paper is organized as follows: in Section \ref{sec:lemma} we prove two auxiliary lemmas, in Section \ref{sec:proof} we prove Theorem \ref{thm:main}, and in Section \ref{sec:Gnr} we compute the height and the area of the fundamental region of a binary form $F_{n, r}$ defined in (\ref{eq:Fnr}).

\section{Auxiliary Results} \label{sec:lemma}

In this section we prove two auxiliary lemmas.

\begin{lem} \label{lem:logsin}
$\displaystyle \int\limits_0^{\frac{\pi}{2}} \log(\sin(x))dx = -\frac{\pi}{2}\log 2$.
\end{lem}

\begin{proof}
%
Denote the left-hand side by $I$. Then
$$
I = \int\limits_0^{\frac{\pi}{2}}\log(\sin(x))dx = \int\limits_0^{\frac{\pi}{2}}\log\left(\cos\left(\frac{\pi}{2} - x\right)\right)dx = \int\limits_0^{\frac{\pi}{2}}\log(\cos(x))dx,
$$
so
\begin{align*}
2I & = \int\limits_0^{\frac{\pi}{2}}\log(\sin(x))dx + \int\limits_0^{\frac{\pi}{2}}\log(\cos(x))dx\\
& = \int\limits_0^{\frac{\pi}{2}}\log(\sin(x)\cos(x))dx\\
& = \int\limits_0^{\frac{\pi}{2}}\log\left(\frac{\sin(2x)}{2}\right)dx\\
& = \int\limits_0^{\frac{\pi}{2}}\log(\sin(2x))dx - \frac{\pi}{2}\log 2\\
& = \frac{1}{2}\int\limits_0^{\pi}\log(\sin x)dx - \frac{\pi}{2}\log 2\\
& = I - \frac{\pi}{2}\log 2.
\end{align*}
Hence $I = -\frac{\pi}{2}\log 2$. 
\end{proof}

\begin{lem} \label{lem:bound}
For every real number $\beta$,
$$
\int\limits_0^{\pi}\log((\sin\beta\cos\theta)^2 + (\cos\beta\sin\theta)^2)\ d\theta\leq -\pi\log 2.
$$
Furthermore, the equality is attained if and only if $\beta = \frac{\pi}{4} + \frac{\pi}{2}k$ for some $k \in \mathbb Z$.
\end{lem}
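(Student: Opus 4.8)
The plan is to first simplify the integrand into a form to which Jensen's inequality (\ref{eq:jensen}) applies directly. Using the double-angle identities $\cos^2\theta = (1+\cos2\theta)/2$ and $\sin^2\theta = (1-\cos2\theta)/2$, I would rewrite
$$(\sin\beta\cos\theta)^2 + (\cos\beta\sin\theta)^2 = \sin^2\beta\cos^2\theta + \cos^2\beta\sin^2\theta = \tfrac{1}{2}\left(1 - \cos2\beta\cos2\theta\right),$$
using $\cos^2\beta - \sin^2\beta = \cos2\beta$. Writing $c = \cos2\beta \in [-1, 1]$ and $f(\theta) = \tfrac{1}{2}(1 - c\cos2\theta) \geq 0$, the claim reduces to showing that $\int_0^\pi \log f(\theta)\,d\theta \leq -\pi\log 2$.

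Next I would apply (\ref{eq:jensen}) with the concave function $\varphi = \log$ on the interval $[0, \pi]$. The one computation needed is the average value of $f$: since $\int_0^\pi \cos2\theta\,d\theta = 0$, we obtain $\tfrac{1}{\pi}\int_0^\pi f(\theta)\,d\theta = \tfrac{1}{2}$. Jensen's inequality then gives
$$\frac{1}{\pi}\int_0^\pi \log f(\theta)\,d\theta \leq \log\!\left(\frac{1}{\pi}\int_0^\pi f(\theta)\,d\theta\right) = \log\frac{1}{2} = -\log 2,$$
and multiplying by $\pi$ yields the stated bound. (An alternative would be to evaluate the integral in closed form via the classical identity $\int_0^{2\pi}\log(1 - c\cos u)\,du = 2\pi\log\frac{1 + \sqrt{1 - c^2}}{2}$, but the Jensen route is cleaner and uses the ingredient already introduced.)

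For the equality characterization I would invoke the strict concavity of $\log$: equality in Jensen's inequality holds precisely when $f(\theta)$ is constant almost everywhere. Since $f(\theta) = \tfrac{1}{2}(1 - c\cos2\theta)$, this occurs if and only if $c = \cos2\beta = 0$, that is $2\beta = \tfrac{\pi}{2} + \pi k$, equivalently $\beta = \tfrac{\pi}{4} + \tfrac{\pi}{2}k$ for some $k \in \mathbb Z$, as required.

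The trigonometric reduction and the average-value computation are routine, so the only points demanding care are the two edge effects. First, when $c = \pm 1$ (i.e.\ $\beta$ a multiple of $\tfrac{\pi}{2}$) the function $f$ vanishes at isolated points, so $\log f$ has logarithmic singularities; one must note these are integrable, so the integral is a well-defined finite Lebesgue integral and Jensen still applies. Second, and this is the main subtlety, the version of Jensen's inequality quoted in (\ref{eq:jensen}) delivers only the non-strict inequality, whereas the equality statement requires that equality forces $f$ to be constant — this step relies on $\log$ being \emph{strictly} concave, which I would state explicitly when deducing the equality condition.
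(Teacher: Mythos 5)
Your proof is correct and follows essentially the same route as the paper: Jensen's inequality with $\varphi=\log$ applied over $[0,\pi]$, the average value $\tfrac12$, and strict concavity for the equality case. Your double-angle rewriting $\tfrac12(1-\cos2\beta\cos2\theta)$ is a small streamlining that makes the equality condition ($\cos2\beta=0$) immediate, where the paper instead tests the constancy of the integrand at $\theta_1=\tfrac{\pi}{6}$ and $\theta_2=\tfrac{2\pi}{3}$; your explicit remarks on the integrable logarithmic singularities and on strict concavity (which the paper misstates as ``strictly convex'') are welcome additions.
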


\begin{proof}
Let $I(\beta)$ denote the left-hand side. Then it follows from Jensen's inequality (\ref{eq:jensen}) with $\varphi(x) = \log x$ that
\begin{align*}
I(\beta) &\leq \pi\log\left(\frac{1}{\pi}\int\limits_0^\pi 
\left(\sin^2\beta\cos^2\theta+\cos^2\beta\sin^2\theta\right)\ d\theta\right)\\
&=\pi\log\left(\frac{1}{\pi}\sin^2\beta\int\limits_0^\pi \cos^2\theta\ d\theta +\cos^2\beta\int\limits_0^\pi\sin^2\theta\ d\theta\right)\\
&=\pi\log\left(\frac{1}{\pi}\sin^2\beta \cdot \frac{\pi}{2}+\cos^2\beta \cdot \frac{\pi}{2}\right)\\
&=\pi\log\left(\frac{1}{2}\right)\\
&=-\pi \log 2.
\end{align*}
Since the function $\log x$ is strictly convex, the equality holds if and only if
\begin{equation} \label{eq:func}
\log((\sin\beta\cos\theta)^2 + (\cos\beta\sin\theta)^2)
\end{equation}
is constant on $(0,\pi)$. This means that for any $\theta_1,\theta_2\in (0,\pi)$,
\begin{align*}
\log((\sin\beta\cos\theta_1)^2 + (\cos\beta\sin\theta_1)^2)
&=\log((\sin\beta\cos\theta_2)^2 + (\cos\beta\sin\theta_2)^2),\\
(\sin\beta\cos\theta_1)^2 + (\cos\beta\sin\theta_1)^2
&=(\sin\beta\cos\theta_2)^2 + (\cos\beta\sin\theta_2)^2.
\end{align*}
We can pick $\theta_1=\frac{\pi}{6}$ and $\theta_2=\frac{2\pi}{3}$ to obtain:
\begin{align*}
\left(\sin\beta \cdot \frac{\sqrt{3}}{2}\right)^2 + \left(\cos\beta \cdot \frac{1}{2}\right)^2
&=\left(\sin\beta \cdot -\frac{1}{2}\right)^2 + \left(\cos\beta \cdot \frac{\sqrt{3}}{2}\right)^2,\\
\frac{3}{4}\sin^2\beta + \frac{1}{4}\cos^2\beta
&=\frac{1}{4}\sin^2\beta + \frac{3}{4}\cos^2\beta,\\
\frac{1}{2}\sin^2\beta
&=\frac{1}{2}\cos^2\beta,\\
\sin^2\beta
&=\cos^2\beta.
\end{align*}
This happens if and only if $\beta=\frac{\pi}{4} + \frac{\pi}{2}k$ for some integer $k$. Plugging this value of $\beta$ into (\ref{eq:func}), we see that the resulting function is constant and equal to $-\log 2$. Hence $I\left(\frac{\pi}{4} + \frac{\pi}{2}k\right) = -\pi \log 2$ for every integer $k$.
\end{proof}


\section{Proof of Theorem \ref{thm:main}} \label{sec:proof}

Suppose that the binary form $F(x, y)$ has exactly $r$ roots $(\gamma_j \colon \delta_j)$ on $\mathbb R\mathbb P^1$, counting multiplicity. Write
$$
F(x, y) = \prod\limits_{j = 0}^{r - 1}(\delta_jx - \gamma_j y) \prod\limits_{k = r}^{n - 1}(\delta_k x - \gamma_k y),
$$
where $\delta_j, \gamma_j \in \mathbb R$ and $\delta_k, \gamma_k \in \mathbb C$. Notice that we can rewrite $F$ as follows:
$$
F(x, y) = h_F\prod\limits_{j = 0}^{r - 1}(\delta_j'x - \gamma_j' y) \prod\limits_{k = r}^{n - 1}(\delta_k' x - \gamma_k' y),
$$
where we now have
$$
(\gamma_j')^2 + (\delta_j')^2 = 1 \quad \text{and} \quad \left|\gamma_k'\right|^2 + \left|\delta_k'\right|^2 = 1.
$$
In particular, notice that for $j = 0, 1, \ldots, r - 1$ each point $(\gamma_j', \delta_j')$ lies on the unit circle, so $(\gamma_j', \delta_j') = (\cos \alpha_j, \sin \alpha_j)$ for some $\alpha_j \in \mathbb R$. Since
\begin{align*}
F(\cos \theta, \sin \theta)
& = h_F \prod\limits_{j = 0}^{r - 1}\sin\left(\theta - \alpha_j\right)\prod\limits_{k = r}^{n - 1}(\delta_k' \cos \theta - \gamma_k' \sin \theta),
\end{align*}
we can use the formula (\ref{eq:polar-formula}) to compute $A_F$:
\begin{align*}
A_F
& = \frac{1}{2}\int\limits_0^{2\pi}\frac{d\theta}{|F(\cos \theta, \sin \theta)|^{2/n}}\\
& = \frac{1}{2h_F^{2/n}}\int\limits_0^{2\pi}\sqrt[n]{\prod\limits_{j = 0}^{r - 1}\csc^2\left(\theta - \alpha_j\right)} \cdot \sqrt[n]{\prod\limits_{k = r}^{n - 1}|\delta_k' \cos \theta - \gamma_k' \sin \theta|^{-2}}\ d\theta.
\end{align*}
Since the integrand has period $\pi$, we conclude that
$$
h_F^{2/n}A_F = \int\limits_0^{\pi}\sqrt[n]{\prod\limits_{j = 0}^{r - 1}\csc^2\left(\theta - \alpha_j\right)} \cdot \sqrt[n]{\prod\limits_{k = r}^{n - 1}|\delta_k' \cos \theta - \gamma_k' \sin \theta|^{-2}}\ d\theta.
$$
By Jensen's inequality (\ref{eq:jensen}) with $\varphi(x) = \log x$,
\begin{align*}
\log\left(\frac{h_F^{2/n}A_F}{\pi}\right) 
& \geq \frac{1}{\pi}
    \int\limits_0^{\pi}\log\left(\sqrt[n]{\prod\limits_{j = 0}^{r - 1}\csc^2\left(\theta - \alpha_j\right)} \cdot \sqrt[n]{\prod\limits_{k = r}^{n - 1}|\delta_k' \cos \theta - \gamma_k' \sin \theta|^{-2}}\right)\ d\theta\\
& = \frac{1}{n\pi}\sum\limits_{j = 0}^{r - 1}\int\limits_0^\pi\log(\csc^2(\theta - \alpha_j))d\theta + \frac{1}{n\pi}\sum\limits_{k = r}^{n - 1}\int\limits_0^\pi\log\left(|\delta_k' \cos \theta - \gamma_k'\sin \theta|^{-2}\right)d\theta\\
& = \frac{r}{n\pi}\int\limits_0^\pi\log(\csc^2(\theta))d\theta + \frac{1}{n\pi}\sum\limits_{k = r}^{n - 1}\int\limits_0^\pi\log\left(|\delta_k' \cos \theta - \gamma_k'\sin\theta|^{-2}\right)d\theta\\
\text{Lemma \ref{lem:logsin}} \rightarrow & = \frac{r}{n}\log 4 + \frac{1}{n\pi}I,
\end{align*}
where
$$
I = \sum\limits_{k = r}^{n - 1}\int\limits_0^\pi\log\left(|\delta_k' \cos \theta - \gamma_k'\sin \theta|^{-2}\right)d\theta.
$$


It remains to prove that $I \geq (n - r)\pi \log 2$. Since $\left|\gamma_k'\right|^2 + \left|\delta_k'\right|^2 = 1$, there exist real numbers $\beta_k$ such that
$$
|\gamma_k'|=\cos \beta_k \quad \text{and} \quad |\delta_k'|=\sin \beta_k.
$$
%
%
%
Notice that 
\begin{align*}
|\delta_k' \cos \theta - \gamma_k' \sin \theta| \cdot |\delta_k' \cos (\pi-\theta) - \gamma_k' \sin (\pi-\theta)|
& = |(\delta_k' \cos \theta)^2 - (\gamma_k' \sin \theta)^2|\\
& \leq \left|\delta_k' \cos \theta\right|^2 + \left|\gamma_k' \sin \theta\right|^2\\
& = (\sin\beta_k\cos\theta)^2+(\cos\beta_k\sin\theta)^2,
\end{align*}
so
$$
(|\delta_k' \cos \theta - \gamma_k' \sin \theta||\delta_k' \cos (\pi-\theta) - \gamma_k' \sin (\pi-\theta)|)^{-2}\geq ((\sin\beta_k\cos\theta)^2+(\cos\beta_k\sin\theta)^2)^{-2}.
$$
Consequently,
\small
\begin{align*}
2I
& = \sum\limits_{k = r}^{n - 1}\int\limits_0^\pi\log\left(|\delta_k' \cos \theta - \gamma_k'\sin \theta|^{-2}\right)d\theta + \sum\limits_{k = r}^{n - 1}\int\limits_0^\pi\log\left(|\delta_k' \cos(\pi - \theta) - \gamma_k'\sin(\pi - \theta)|^{-2}\right)d\theta\\
& = \sum\limits_{k = r}^{n - 1}\int\limits_0^\pi\log\left(|\delta_k' \cos \theta - \gamma_k'\sin \theta| \cdot |\delta_k' \cos(\pi - \theta) - \gamma_k'\sin(\pi - \theta)|\right)^{-2} d\theta\\
& \geq \sum\limits_{k = r}^{n - 1}\int\limits_0^\pi\log((\sin\beta_k\cos\theta)^2+(\cos\beta_k\sin\theta)^2)^{-2} d\theta\\
\text{Lemma \ref{lem:bound}} \rightarrow & \geq 2(n - r)\pi \log 2.
\end{align*}
\normalsize

In summary, we proved that
$$
\log\left(\frac{h_F^{2/n}A_F}{\pi}\right) \geq \frac{r}{n}\log 4 + \frac{n - r}{n}\log 2.
$$
Exponentiation on both sides yields the desired inequality $h_F^{2/n}A_F\geq \left(2^{1+(r/n)}\right)\pi$.

\section{The Family $F_{n, r}$} \label{sec:Gnr}

In this section we derive formulas for the height and the area of the fundamental region of a binary form $F_{n, r}$ defined in (\ref{eq:Fnr}). By definition, the height of $F_{n, r}$ is given by $h_{F_{n, r}} = 2^{(n - r)/2}$. To compute $A_{F_{n, r}}$, notice that for $r \geq 1$ we have
\begin{align*}
\left|F_{n, r}(\cos \theta, \sin \theta)\right|
& = \left|(\cos \theta - i\sin \theta)^{n - r}\prod\limits_{k = 1}^r\left(\cos \theta\sin\frac{k\pi}{r} - \sin \theta\cos\frac{k\pi}{r}\right)\right|\\
& = \left|\prod\limits_{k = 1}^r\sin\left(\theta - \frac{k\pi}{r}\right)\right|
 = \left|2^{-(r - 1)}\sin(r\theta)\right|,
\end{align*}
where the last equality follows from the identity $\sin(r\theta) = 2^{r - 1}\prod_{k = 1}^r\sin\left(\frac{k\pi}{r} - \theta\right)$ (see, for example, \cite[Section 2]{mosunov1}). By (\ref{eq:polar-formula}),

\begin{align*}
A_{F_{n, r}}
& = \frac{1}{2}\int\limits_0^{2\pi}\sqrt[n]{4^{r - 1}\csc^2(r\theta)}d\theta\\
& = 2^{2(r - 1)/n - 1}\int\limits_0^{2\pi}\sqrt[n]{\csc^2(r\theta)}d\theta\\
& = \frac{2^{2(r - 1)/n - 1}}{r}\int\limits_0^{2\pi r}\sqrt[n]{\csc^2\theta}d\theta\\
& = \frac{2^{2(r - 1)/n - 1}}{r} \cdot 4r\int\limits_0^{\pi/2}(\sin \theta)^{-\frac{2}{n}}d\theta.
\end{align*}
Now, for $n = 1, 2$ the above integral diverges, while for $n \geq 3$ it follows from (\ref{eq:beta}) that
$$
A_{F_{n, r}} = 2^{2(r - 1)/n} \cdot 2\int\limits_0^{\pi/2}(\sin \theta)^{-\frac{2}{n}}d\theta = 2^{2(r - 1)/n}B\left(\frac{1}{2} - \frac{1}{n}, \frac{1}{2}\right).
$$
Thus,
$$
h_{F_{n, r}}^{2/n}A_{F_{n, r}} = 
\begin{cases}
2\pi & \text{if $r = 0$,}\\
\infty & \text{if $n = 1, 2$ and $r > 0$,}\\
2^{1 + (r - 2)/n}B\left(\frac{1}{2} - \frac{1}{n}, \frac{1}{2}\right) & \text{if $n > 2$ and $r > 0$.}
\end{cases}
$$

\bibliography{fang-mosunov-lower-bound}
\bibliographystyle{plain}

\end{document}